\providecommand{\R}{\mathbb{R}}
\providecommand{\C}{\mathbb{C}}
\providecommand{\om}{\omega}
\renewcommand{\leq}{\leqslant}
\renewcommand{\Re}{\mbox{Re}}
\renewcommand{\div}{\operatorname{div}}
\newcommand{\curl}{\operatorname{curl}}
\newcommand{\Id}{\operatorname{Id}}
\newtheorem{Theorem}{Theorem}
\newtheorem{Definition}{Definition}
\newtheorem{Proposition}{Proposition}
\newtheorem{Lemma}{Lemma}
\begin{document}

\author{Olivier Glass\footnote{CEREMADE, UMR CNRS 7534,
Universit\'e Paris-Dauphine, 
Place du Mar\'echal de Lattre de Tassigny,
75775 Paris Cedex 16,
FRANCE
},
Franck Sueur\footnote{CNRS, UMR 7598, Laboratoire Jacques-Louis Lions, F-75005, Paris, France}
\footnote{UPMC Univ Paris 06, UMR 7598, Laboratoire Jacques-Louis Lions, F-75005, Paris, France}}

\date{\today}
\title{Low regularity solutions for the two-dimensional \\ ``rigid body  + incompressible Euler"  system.  }
\maketitle
\begin{abstract}
In this note we consider the motion of a solid body in a two dimensional  incompressible perfect fluid. 
We prove the global existence of solutions in the case where the initial vorticity belongs to $L^p$ with $p>1$ and is compactly supported. 
We do not assume that the energy is finite.
\end{abstract}
\section{Introduction}
\label{Intro}
In this paper, we consider the motion of a solid body in a perfect incompressible fluid which fills the plane, in a low-regularity setting. Let us describe the system under view.
Let $\mathcal{S}_0$ be a closed, bounded, connected and simply connected subset of the plane with smooth boundary.
We assume that the body initially occupies the domain $\mathcal{S}_0$ and rigidly moves so that at  time $t$  it occupies an isometric  domain denoted by $\mathcal{S}(t)$.
We denote $\mathcal{F} (t) := \R^2  \setminus \mathcal{S}(t) $ the domain occupied by the fluid  at  time $t$ starting from the initial domain $\mathcal{F}_{0}  := \R^2 \setminus {\mathcal{S}}_{0} $. \par
The equations modelling the dynamics of the system then read
\begin{eqnarray}
\displaystyle \frac{\partial u }{\partial t}+(u  \cdot\nabla)u   + \nabla p =0 && \text{for} \ x\in \mathcal{F} (t), \label{Euler1}\\
\div u   = 0 && \text{for} \ x\in \mathcal{F}(t) , \label{Euler2} \\
u  \cdot n =   u_\mathcal{S} \cdot n && \text{for}  \  x\in \partial \mathcal{S}  (t),   \label{Euler3} \\
\lim_{|x|\to \infty} |u| =0,& & \\
m h'' (t) &=&  \int_{\partial  \mathcal{S} (t)} p n \, ds ,  \label{Solide1} \\ 
\mathcal{J} r' (t) &= &   \int_{\partial  \mathcal{S} (t)} (x-  h (t) )^\perp \cdot pn   \, ds , \label{Solide2} \\
u |_{t= 0} = u_0 & &  \text{for}  \  x\in  \mathcal{F}_0 ,  \label{Eulerci2} \\
h (0)= h_0 , \ h' (0)=  \ell_0 , & &   r  (0)=  r _0.  \label{Solideci}
\end{eqnarray}
Here $u=(u_1,u_2)$ and $p$ denote the velocity and pressure fields,
$m$ and $ \mathcal{J}$ denote respectively the mass and the moment of inertia of the body  while the fluid  is supposed to be  homogeneous of density $1$, to simplify the notations.
When $x=(x_1,x_2)$ the notation $x^\perp $ stands for $x^\perp =( -x_2 , x_1 )$, 
$n$ denotes  the unit normal vector pointing outside the fluid,  $h'(t)$
is the velocity of the center of mass  $h (t)$ of the body and $r(t)$ denotes the angular velocity of the rigid body. Finally we denote by $u_{{\mathcal S}}$ the velocity of the body:
\begin{equation} \label{VeloBody}
u_\mathcal{S} (t,x) =   h' (t)+ r (t) (x-  h (t))^\perp .
\end{equation}
\ \par
Without loss of generality and for the rest of the paper, we assume from now on that 
\begin{equation*}
h_{0}= 0,
\end{equation*}
which means that the body is centered at the origin at the initial time $t=0$. \par
\ \par
The equations \eqref{Euler1} and \eqref{Euler2} are the incompressible Euler equations, the condition \eqref{Euler3} means that the boundary is impermeable and the equations \eqref{Solide1} and \eqref{Solide2} are the Newton's balance law for linear and angular momenta. \par 
\ \par
The main goal of this paper is to prove the global existence of some solutions this system in the case where 
\begin{gather}
\label{deco}
u_0  \in    \tilde{L}^{2} := L^{2} ({\mathcal F}_0) \oplus \R  H_{0}  ,
\\ \label{TourbillonYudo}
w_0 := \curl u_0  \in  L^{p}_{c } ({\mathcal F}_0), \text{ with } p> 1 .
\end{gather}
Above 
\begin{equation*}
H_{0} (x) := \frac{x^{\perp}}{| x |^{2} } \ \text{ for } \ x \in {\mathcal F}_{0},
\end{equation*}
and the notation $L_c^{p}({\mathcal F}_0)$ denotes the subset of the functions in $L^{p}({\mathcal F}_0)$ which are compactly supported in the closure of ${\mathcal F}_0$. Note in particular that $u_{0}$ is not necessarily of finite energy. \par
\ \par
Our result below extends the result \cite{WangXin} of Wang and Xin where it is assumed that the vorticity $w_0$ belongs to $ L^1 \cap L^{p}$ with  $p \in (\frac{4}{3},+\infty ]$ (but is not necessarily compactly supported) and that the solution has finite energy; it also extends the result of \cite{GLS} where it is assumed that  the vorticity $w_0$  belongs to $L^{p}$ with $p \in (2,+\infty ]$ and is compactly supported, with possibly infinite kinetic energy. Also, this provides a counterpart of the results by DiPerna and Majda \cite{DiPernaMajda} about the global  existence of weak solutions in the case of the fluid alone when the vorticity belongs in $ L^1 \cap L^{p}$ with $p >1$.
Let us also mention that the global existence and uniqueness of finite energy classical solutions to the problem \eqref{Euler1}--\eqref{Solideci} has been tackled by Ortega, Rosier and Takahashi in \cite{ort1}. 
This result was extended in \cite{GS}  to the case of infinite energy. \par
\ \par
\section{Main result}
\label{MR}
Before giving the main statement of this paper, let us give our definition of a weak solution. This relies on a change of variable which we now describe. \par
Since $\mathcal{S}(t)$ is obtained from ${\mathcal S}_{0}$ by a rigid motion, there exists a rotation matrix 
\begin{eqnarray*} 
Q (t):= 
\begin{bmatrix}
\cos  \theta (t) & - \sin \theta (t) \\
\sin  \theta (t) & \cos  \theta (t)
\end{bmatrix},
\end{eqnarray*}
such that  the position $\eta (t,x) \in \mathcal{S} (t)$  at 
the time $t$ of the point fixed to the body with an initial position $x$ is 
\begin{eqnarray*} 
\eta (t,x) := h (t) + Q (t)x .
\end{eqnarray*}
The angle $\theta$ satisfies
\begin{equation*}
\theta'(t) = r (t),
\end{equation*}
and we choose $\theta (t)$ such that $\theta (0) =  0$.
\par

In order to transfer the equations in the body frame we apply the following isometric change of variable:
\begin{equation*} 
\left\{
\begin{array}{l}
 v (t,x)=Q (t)^T\,  u(t,Q(t)x+h(t)), \\
 q (t,x)=p(t,Q(t)x+h(t)), \\
 {\ell} (t)=Q (t)^T \ h' (t) .
\end{array}\right.
\end{equation*}
so that the equations  (\ref{Euler1})-(\ref{Solideci})  become
\begin{eqnarray}
\label{Euler11}
\displaystyle \frac{\partial v}{\partial t}
+ \left[(v-\ell-r x^\perp)\cdot\nabla\right]v 
+ r v^\perp +\nabla q =0 && x\in \mathcal{F}_{0} ,\\
\label{Euler12}
\div v = 0 && x\in \mathcal{F}_{0} , \\
\label{Euler13}
v\cdot n = \left(\ell +r x^\perp\right)\cdot n && x\in \partial \mathcal{S}_0, \\
\label{Solide11}
m \ell'(t)=\int_{\partial \mathcal{S}_0} q n \, ds-mr (\ell)^\perp, & & \\
\label{Solide12}
\mathcal{J} r'(t)=\int_{\partial \mathcal{S}_0} x^\perp \cdot q n \, ds, & &  \\
\label{Euler1ci}
v(0,x)= v_0 (x) && x\in \mathcal{F}_{0} ,\\
\label{Solide1ci}
\ell(0)= \ell_0,\ r (0)= r_0 . 
\end{eqnarray}
We also define
\begin{eqnarray*} 
\omega(t,x) := w(t, Q(t)x+h(t) )  = \curl v(t,x).
\end{eqnarray*}
Taking the curl of the equation \eqref{Euler11} we get
\begin{equation}
\label{vorty}
\partial_t  \omega + \left[(v-\ell-r x^\perp)\cdot\nabla\right]  \omega =0 \text{ for }
x \in \mathcal{F}_{0} .
\end{equation}
Let us now give a global weak formulation of the problem by considering ---for the solution as well as for test functions--- a velocity field on the whole plane, with the constraint to be rigid on $ \mathcal{S}_{0} $. \par
We introduce the following space
\begin{equation*}
\mathcal{H}: = \left\{\Psi\in L^2_{loc} (\mathbb{R}^2) ;  \  \div \Psi =0 \ \text{ in } \ \mathbb{R}^2 \ \text{ and } \
D \Psi =0 \ \text{ in } \ \mathcal{S}_0 \right\}, \ \text{ where } \ D\Psi := \nabla \Psi + \nabla \Psi^{T} .
\end{equation*}
It is classical that 
\begin{equation*}
\mathcal{H} = \left\{ \Psi\in L^2_{loc}  (\mathbb{R}^2) ;  \  \div \Psi =0 \ \text{ in } \ \mathbb{R}^2 \ \text{ and } \
\exists (\ell_\Psi , r_\Psi ) \in \mathbb{R}^2 \times \mathbb{R},  \  \forall x \in  \mathcal{S}_0 , \
\Psi  (x) = \ell_\Psi +  r_\Psi x^\perp \right\} , 
\end{equation*}
and the $(\ell_\Psi , r_\Psi )$ above are unique.

Let us also introduce $\tilde{\mathcal{H} }$  the set of the test functions $\Psi$ in $\mathcal{H} $  whose restriction $\Psi |_{ \overline{\mathcal{F}_{0}}}$ to the closure of the fluid domain belongs to $C^1_c ( \overline{\mathcal{F}_{0}})$, and, for  $T>0$, 
$\tilde{\mathcal{H} }_{T}$ the set of the test functions $\Psi$ in $C^1([0,T]; \mathcal{H} )$ whose restriction $\Psi |_{[0,T] \times  \overline{\mathcal{F}_{0}}}$ to the closure of the fluid domain belongs to $C^1_c ( [0,T] \times \overline{\mathcal{F}_{0}})$. \par
When $(\overline{u},\overline{v}) \in \mathcal{H} \times \tilde{\mathcal{H} }$, we denote by 
\begin{equation*}
(\overline{u},\overline{v})_\rho
:= \int_{\mathbb{R}^2} (\rho \xi_{\mathcal{S}_{0} } + \xi_{\mathcal{F}_{0} }) \, \overline{u} \cdot \overline{v} 
= m \, \ell_u \cdot \ell_v + \mathcal{J} r_u r_v +  \int_{\mathcal{F}_{0} } u \cdot v  ,
\end{equation*}
where the notation $\xi_{A} $ stands for the characteristic function of the set $A$, $ u  \in  L^2_{loc}(\mathcal{F}_{0})$ denotes  the restrictions of $\overline{u}$ to $\mathcal{F}_{0}$ and $ \rho$ denotes the density in the solid body. Note that this density is related to $m$ and ${\mathcal J}$ by
\begin{equation*}
m = \int_{{\mathcal S}_{0}} \rho(x) \, dx
\ \text{ and } \ 
{\mathcal J} = \int_{{\mathcal S}_{0}} \rho(x) |x|^{2} \, dx.
\end{equation*}
Our definition of a weak solution is the following.
\begin{Definition}[Weak Solution] \label{DefWS}
Let us be given  $ \overline{v}_0 \in \mathcal{H}$ and $T>0$.
We say that  $ \overline{v} \in C ([0,T]; \mathcal{H}-w)$ is a weak solution of \eqref{Euler11}--\eqref{Solide1ci} in $[0,T]$ if for any test function $\Psi \in \tilde{\mathcal{H} }_{T}$,
\begin{equation*}
(  \Psi(T,\cdot ) , \overline{v}(T,\cdot) )_\rho
- (  \Psi (0,\cdot ) , \overline{v}_0 )_\rho
= \int_0^T \left( \frac{\partial \Psi}{\partial t},\overline{v} \right)_\rho \, dt
+ \int_0^T \!\! \int_{\mathcal{F}_0} v\cdot\left(\left(v-\ell_v  - r_v x^\perp  \right)\cdot\nabla\right)\Psi \, dx\, dt
- \int_0^T \!\! \int_{\mathcal{F}_0} r v^\perp   \cdot \Psi \, dx \, dt
\end{equation*}
We will equivalently say that $(\ell,r,u)$ is a weak solution of \eqref{Euler11}--\eqref{Solide1ci}.
\end{Definition}
Definition \ref{DefWS} is legitimate since a classical solution of \eqref{Euler11}--\eqref{Solide1ci} in $[0,T]$ is also a weak solution. This follows easily from an integration by parts in space which provides
\begin{equation} \label{SemiW}
(\partial_{t} \overline{v} , \Psi  )_\rho 
= \int_{\mathcal{F}_0} v \cdot\left(\left(v-\ell_v - r_v x^\perp \right)\cdot\nabla\right)\Psi \, dx
- \int_{\mathcal{F}_0} r v^\perp   \cdot \Psi  \, dx ,
\end{equation}
and then from an  integration by parts in time. \par
\ \par
One has the following result of existence of weak solutions for the above system, the initial position of the solid being given. 
\begin{Theorem} \label{ThmYudo}
Let $p > 1$  and $T>0$.
For any $v_0 \in  \tilde{L}^{2}$, $(\ell_0 , r_0) \in \R^2 \times \R$, such that:
\begin{equation} \label{CondCompatibilite}
\div v_0 =0 \text{ in } {\mathcal F}_0 \ \text{ and } \  v_0   \cdot  n = (\ell_0 + r_0 x^{\perp})   \cdot  n \text{ on } \partial \mathcal{S}_0,
\end{equation}
and $\omega_0 := \curl v_0 $ satisfies \eqref{TourbillonYudo},
there exists a weak solution of the system such that $\omega \in L^\infty (0,T; (L^1 \cap L^p) ({\mathcal F}_0 ) )$. \par
\end{Theorem}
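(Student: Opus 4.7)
The plan is to construct the weak solution by regularization and compactness, in the spirit of DiPerna--Majda \cite{DiPernaMajda}, with the additional difficulties introduced by the moving solid and the lack of finite energy.

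\textbf{Step 1 (regularization).} I would approximate $\omega_0$ by a sequence $\omega_0^n \in C^\infty_c(\overline{\mathcal{F}_0})$, supported in a fixed compact set, converging to $\omega_0$ strongly in $L^1 \cap L^p(\mathcal{F}_0)$. Using the decomposition $v_0 = v_0^{L^2} + \beta H_0$ associated with \eqref{deco}, I reconstruct approximate initial velocities $v_0^n \in \tilde{L}^2$ with $\curl v_0^n = \omega_0^n$, the same harmonic coefficient $\beta$, and satisfying the compatibility condition \eqref{CondCompatibilite} with $(\ell_0, r_0)$ (this essentially amounts to a Biot--Savart type construction in $\mathcal{F}_0$, matching the circulation around $\partial \mathcal{S}_0$ and the boundary trace).

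\textbf{Step 2 (approximate solutions).} For each $n$, the initial vorticity is smooth and compactly supported, so in particular it belongs to $L^q_c(\mathcal{F}_0)$ for any $q > 2$. Hence the existence theorem from \cite{GLS} provides a weak solution $(\ell^n, r^n, \overline{v}^n)$ on $[0,T]$ with $\omega^n \in L^\infty(0,T; L^1 \cap L^\infty)$ and $\omega^n$ transported along the characteristics of $v^n - \ell^n - r^n x^\perp$, so that in particular $\|\omega^n(t,\cdot)\|_{L^1 \cap L^p} = \|\omega_0^n\|_{L^1 \cap L^p}$ is uniformly bounded in $n$ and $t$.

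\textbf{Step 3 (uniform estimates).} I would first show that $(\ell^n, r^n)$ is uniformly bounded in $L^\infty(0,T)$ by combining Newton's laws \eqref{Solide11}--\eqref{Solide12} with a suitable energy-type identity adapted to the $\tilde{L}^2$ framework (the finite part of the energy, i.e.\ the $L^2$ component of $\overline{v}^n$ plus the solid contribution, should be controlled). Together with the uniform $L^1 \cap L^p$ bound on $\omega^n$, a Biot--Savart formula in the exterior domain yields uniform local bounds on $v^n$ in $L^s_{loc}$ for some $s > 2$ (depending on $p$), as well as $W^{1,p}_{loc}$ regularity. Since the transport velocity $v^n - \ell^n - r^n x^\perp$ is locally bounded in $L^\infty_t L^s_x$, a standard argument shows that $\operatorname{supp} \omega^n(t,\cdot)$ remains in a fixed compact set, uniformly in $n$ and $t \in [0,T]$.

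\textbf{Step 4 (compactness and passage to the limit).} Up to a subsequence, $\omega^n \rightharpoonup \omega$ weakly-$\ast$ in $L^\infty(0,T; L^p)$, $(\ell^n, r^n) \to (\ell, r)$ a.e.\ in $t$ (the equation \eqref{SemiW} gives equicontinuity of $(\ell^n, r^n)$), and $v^n \rightharpoonup v$ weakly in $L^\infty(0,T; L^s_{loc})$. Using \eqref{SemiW} tested against $\Psi \in \tilde{\mathcal{H}}$, I deduce equicontinuity in time of $t \mapsto (\overline{v}^n(t,\cdot), \Psi)_\rho$, uniformly in $n$. Combined with the spatial $W^{1,p}_{loc}$ regularity coming from the Biot--Savart formula, an Aubin--Lions type argument delivers strong convergence of $v^n$ in $L^2_{loc}([0,T]\times\mathcal{F}_0)$. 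This allows passage to the limit in the quadratic term $\int\!\int v^n \otimes (v^n - \ell^n - r^n x^\perp) : \nabla \Psi$ of the weak formulation. The other terms pass by weak convergence and the strong convergence of $(\ell^n, r^n)$.

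\textbf{Main obstacle.} The delicate point is the strong compactness of $v^n$ in $L^2_{loc}$ when $p$ is only assumed to be greater than $1$, i.e.\ in the DiPerna--Majda range where one is below the threshold (namely $p > 4/3$ in \cite{WangXin} or $p > 2$ in \cite{GLS}) that ensures boundedness or even continuity of $v^n$. One must carefully exploit the compact support of the approximate vorticities and the Biot--Savart regularity in the exterior of the moving solid, and check that the presence of the body (through the harmonic contribution $H_0$ and the rigid boundary data) does not destroy the local compactness. In particular, one must ensure that the nonlinear interaction between the fluid and the solid (the terms involving $\ell^n, r^n$ inside the transport velocity and in \eqref{SemiW}) passes to the limit without concentration, which is where the uniform control on the support of $\omega^n$ plays a decisive role.
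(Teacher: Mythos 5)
Your proposal follows essentially the same route as the paper: regularize the initial vorticity and reconstruct the initial velocity through the exterior div/curl decomposition keeping the harmonic part $\beta H$ fixed, invoke the existence theory for smooth compactly supported data (the paper uses \cite{GS} rather than \cite{GLS} here), derive uniform bounds via the conservation of $\|\omega\|_{L^1\cap L^p}$, an energy-like estimate on $\tilde v = v-\beta H$, and a bound on $(\ell',r')$, and conclude by Ascoli plus Aubin--Lions (using $W^{1,p}_{loc}\hookrightarrow\hookrightarrow L^2_{loc}$ for $p>1$ together with a temporal bound on $\partial_t v$ in $H^{-2}_{loc}$) to pass to the limit in the weak formulation. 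The difficulties you flag (strong $L^2_{loc}$ compactness below the $p>2$ threshold, uniform control of the vorticity support, and the absence of concentration in the fluid--solid coupling terms) are exactly the ones the paper's a priori estimates are designed to handle.
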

The rest of the paper is devoted to the proof of Theorem  \ref{ThmYudo}.
\section{Representation of the velocity in the body frame}
\label{Sec:Velocity}
In this section, we study the elliptic $\div$/$\curl$ system which allows to pass from the vorticity to the velocity field, in the body frame. 
We do not claim any originality here, but rather collect some well-known properties which will be useful in the sequel. 
We refer here for instance to \cite{GT,Kikuchi83,MP,ift_lop_euler,GLS}.
\subsection{Green's function and  Biot-Savart operator}
\label{Sec:GreensFunction}
We denote by $G (x,y)$ the Green's function of $\mathcal{F}_{0}$ with Dirichlet boundary conditions. 
We also introduce the function $K (x,y)=\nabla^\perp G (x,y)$ known as the kernel of the Biot-Savart operator  $K [\om]$ which therefore acts on $\om \in L^p_{c}  (\mathcal{F}_{0})$  through the formula 
\begin{equation}
 \label{correct}
K[\om](x)= \int_{\mathcal{F}_{0}}K (x,y) \om(y) \, dy.
\end{equation}
Then $K[\om]$ belongs to $L^2 (\mathcal{F}_{0}) \cap W^{1,p}({\mathcal F}_{0})$, is divergence-free and tangent to the boundary, satisfies $\curl K[\om]=\om$ in $\mathcal{F}_{0}$ and its circulation around $\partial \mathcal{S}_0$ is given by
\begin{equation*}
\int_{\partial \mathcal{S}_0  }   K [ \omega] \cdot {\tau} \, ds = -  \int_{\mathcal{F}_{0}  }  \omega \, dx ,
\end{equation*}
where $\tau$ is the tangent unit vector field on $\partial {\mathcal S}_{0}$. \par
Moreover, there exists $C>0$ (depending only on $\mathcal{F}_{0}$, on $\text{supp }  \om $ and on $p$) such that for any $\om\in L^p_{c} (\mathcal{F}_{0})$, 
\begin{equation} \nonumber
\|  K[\om]  \|_{L^2 (\mathcal{F}_{0})} +  \| \nabla  K[\om]  \|_{L^p (\mathcal{F}_{0})} \leqslant 
C \|  \om  \|_{L^p (\mathcal{F}_{0})}  . 
\end{equation}
\subsection{Harmonic field}
To take the velocity circulation around the body into account, the following vector field will be useful. There exists a unique solution $H$ vanishing at infinity of 
\begin{gather*}
\div H = 0 \quad   \text{for}  \ x\in  \mathcal{F}_{0}, \\
\curl H = 0 \quad   \text{for}  \ x\in  \mathcal{F}_{0}, \\
H \cdot n = 0 \quad   \text{for}  \ x\in   \partial \mathcal{S}_0, \\
\int_{\partial \mathcal{S}_0 } H \cdot \tau \, ds = 1 .
\end{gather*}
This solution is smooth and decays like $1/ | x| $ at infinity.
We also have:
\begin{equation} \label{EstiHache}
\nabla H  \in L^\infty (\mathcal{F}) \ \text{ and } \
 H -  \frac{x^{\perp}}{2 \pi | x |^{2}}, \ \nabla H , \ H^\perp - x^\perp  \cdot\nabla H \in L^2 (\mathcal{F}) .
\end{equation}
Let us stress in particular that the second estimate above yields
\begin{equation*}
\tilde{L}^{2}  =  L^{2} ({\mathcal F}_0) \oplus \R  H .
\end{equation*}
Therefore in the sequel we will rather use this last decomposition than \eqref{deco}.
\subsection{Kirchoff potentials}
\label{KirPo}
Now to lift harmonically the boundary conditions, we will make use of  the  Kirchoff potentials, which are the solutions $\Phi:=(\Phi_{i})_{i=1,2, 3}$ of the following problems: 
\begin{equation} \label{t1.3sec}
-\Delta \Phi_i = 0 \quad   \text{for}  \ x\in \mathcal{F}_{0}   ,
\end{equation}
\begin{equation} \label{t1.4sec}
\Phi_i \longrightarrow 0 \quad  \text{for}  \ x \rightarrow  \infty, 
\end{equation}
\begin{equation} \label{t1.5sec}
\frac{\partial \Phi_i}{\partial n}=K_i 
\quad  \text{for}  \  x\in \partial \mathcal{F}_{0}   ,
\end{equation}
where
\begin{equation} \label{t1.6sec}
(K_{1},\, K_{2}, \, K_{3}) :=(n_1,\, n_2 ,\, x^\perp \cdot n).
\end{equation}
These functions are smooth and decay at infinity as follows:
\begin{equation} \nonumber
\nabla \Phi_i = {\mathcal O}\left( \frac{1}{|x|^{2}}\right) \ \text{ and } \nabla^{2}   \Phi_i  = {\mathcal O}\left( \frac{1}{|x|^{3}}\right) \  \text{ as } x \rightarrow \infty .
\end{equation}
\subsection{Velocity decomposition}
Using the functions defined above, we deduce the following proposition.
\begin{Proposition} \label{sisi}
Let $p>1$. Let us be given $\om\in  L^p_{c} (\mathcal{F}_{0})$, $\ell$ in $\R^2$, $r$ and $\gamma $ in  $\R$.
Then there is a unique solution $v$ in $ \tilde{L}^{2} \cap W^{1,p}_{loc}({\mathcal F}_{0})$ of
\begin{equation*}
\left\{ \begin{array}{l} 
 \div v = 0,  \quad   \text{for}  \ x\in  \mathcal{F}_{0} , \\
 \curl v  = \omega  \quad  \text{for}  \ x\in  \mathcal{F}_{0}  , \\
 v \cdot n = \left(\ell+r x^\perp\right)\cdot n  \quad   \text{for}  \ x\in \partial \mathcal{S}_0  , \\
 v \longrightarrow 0 \quad  \text{as}  \ x \rightarrow  \infty, \\
 \int_{ \partial \mathcal{S}_0} v  \cdot  \tau \, ds=  \gamma .
\end{array} \right.
\end{equation*}
Moreover $v$ is given by 
\begin{equation*}
v = K [\omega ] + (\gamma + \alpha )  H + \ell_1 \nabla \Phi_1 + \ell_2 \nabla \Phi_2
+ r \nabla \Phi_3 , \text{ with } \alpha :=   \int_{\mathcal{F}_{0}  }  \omega \, dx .
\end{equation*}
Finally, there exists $C>0$ (depending only on $\mathcal{F}_{0}$, on $\text{supp }  \om $ and on $p$) such that for any $\om\in L^p_{c} (\mathcal{F}_{0})$, 
\begin{equation} \nonumber
\| \nabla v \|_{L^p (\mathcal{F}_{0})} \leqslant 
C (\|  \om  \|_{L^p (\mathcal{F}_{0})} + |   \ell|  +|  r|   + |  \gamma |  ). 
\end{equation}
\end{Proposition}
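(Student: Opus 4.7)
The plan is to verify the stated representation formula termwise (which establishes existence, the decomposition, and the explicit formula in one stroke), then to prove uniqueness by a standard argument on harmonic vector fields in the exterior domain, and finally to assemble the $L^p$ bounds already recorded in Sections~\ref{Sec:GreensFunction}--\ref{KirPo}.

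For existence, I would check that the candidate $v := K[\omega] + (\gamma+\alpha) H + \ell_1\nabla\Phi_1 + \ell_2\nabla\Phi_2 + r\nabla\Phi_3$ satisfies every condition of the system in turn. Divergence-freeness and membership in $\tilde L^2 \cap W^{1,p}_{loc}(\mathcal{F}_0)$ follow from the corresponding properties of the four summands collected in Sections~\ref{Sec:GreensFunction}--\ref{KirPo}. Only $K[\omega]$ contributes to the curl, yielding $\omega$. The normal trace on $\partial \mathcal{S}_0$ reduces, via $K[\omega]\cdot n = H\cdot n = 0$ and $\partial_n\Phi_i = K_i$ from \eqref{t1.5sec}--\eqref{t1.6sec}, to $\ell_1 n_1 + \ell_2 n_2 + r(x^\perp\cdot n) = (\ell + rx^\perp)\cdot n$. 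The $\nabla\Phi_i$ contribute zero circulation since the $\Phi_i$ are single-valued, $K[\omega]$ contributes $-\alpha$, and $H$ contributes $\gamma+\alpha$; the total circulation is therefore $\gamma$. Decay at infinity is immediate from the individual decay rates recalled above.

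For uniqueness, let $w := v_1 - v_2$ be the difference of two solutions: it lies in $\tilde L^2$, is divergence-free and curl-free on $\mathcal{F}_0$, is tangent to $\partial\mathcal{S}_0$, vanishes at infinity and has zero circulation around $\partial\mathcal{S}_0$. The vanishing of the circulation on the unique non-trivial loop of $\mathcal{F}_0$ furnishes a single-valued harmonic potential $\phi$ with $w = \nabla\phi$, $\partial_n\phi = 0$ on $\partial\mathcal{S}_0$ and $\nabla\phi\to 0$ at infinity. A standard truncation and integration by parts of $\phi\Delta\phi = 0$ on $\mathcal{F}_0\cap B_R$, letting $R\to\infty$, then yields $\nabla\phi \equiv 0$, i.e.\ $w \equiv 0$, the boundary term on $\partial B_R$ being absorbed thanks to the $\tilde L^2$ decay of $w$ and of $\phi$ (normalised by subtracting its mean on $\partial B_R$). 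This is the step that demands some care, since $v$ is only in $\tilde L^2$ and not in $L^2$.

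For the estimate, I apply $\nabla$ to the representation formula. Section~\ref{Sec:GreensFunction} controls $\|\nabla K[\omega]\|_{L^p(\mathcal{F}_0)}$ by $C\|\omega\|_{L^p}$; the decays $|\nabla H|=\mathcal{O}(|x|^{-2})$ (from \eqref{EstiHache}) and $|\nabla^2\Phi_i|=\mathcal{O}(|x|^{-3})$ (from Section~\ref{KirPo}), together with interior smoothness, place $\nabla H$ and $\nabla^2\Phi_i$ in $L^p(\mathcal{F}_0)$ for every $p>1$. Combined with the H\"older bound $|\alpha|=|\int\omega|\leq |\text{supp}\,\omega|^{1/p'}\|\omega\|_{L^p}$, these ingredients add up to the claimed inequality. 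The main obstacle in the whole argument is the uniqueness step described above, the rest being an organised bookkeeping of the results of Sections~\ref{Sec:GreensFunction}--\ref{KirPo}.
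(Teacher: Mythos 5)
Your proposal is correct and is essentially the paper's intended argument: the paper states Proposition \ref{sisi} without proof, as a direct assembly of the facts recalled in Sections \ref{Sec:GreensFunction}--\ref{KirPo}, which is precisely your termwise verification of the representation formula plus the standard exterior-domain uniqueness argument. The one point to tighten is the boundary term in the uniqueness step: membership of $w$ in $\tilde{L}^{2}$ alone does not force $R\,\|w\|_{L^2(\partial B_R)}^{2}\to 0$ (a component along $H$ would contribute an $O(1)$ amount), so you should first use that $w$ is divergence- and curl-free with zero circulation and zero flux around $\partial\mathcal{S}_0$ to obtain the genuine decay $w=\mathcal{O}(|x|^{-2})$ (e.g.\ from the Laurent expansion of the associated holomorphic function $w_1-iw_2$), after which the truncated integration by parts closes.
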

\section{Regularization of the initial data}
\label{Sec:Regu}
A general strategy for obtaining a weak solution is to regularize the initial data so that one gets a sequence of initial data which generate some classical solutions, and then to pass  to the limit with respect to the regularization parameter in the weak formulation of the equations.  
We first benefit from the previous section to establish the following approximation result which will be applied to the initial data in the sequel.
Let   $(\ell_{0}, r_{0}, v_{0}) $ as in Theorem \ref{ThmYudo}. 
According to Proposition \ref{sisi} the function  
\begin{equation*}
\tilde{v} _0 := {v}_{0} - \beta H , \text{  where  } \beta := \gamma + \alpha,
\end{equation*}
belongs to $L^{2} ({\mathcal F}_0) $. \par
\begin{Proposition} \label{PropRegu}
There exists $( v^{n}_{0} )_n$ in $\tilde{L}^{2}$, satisfying \eqref{CondCompatibilite} with the same right hand side, such that
\begin{equation*}
\int_{ \partial \mathcal{S}_0} v^{n}_{0}  \cdot  \tau \, ds= \int_{ \partial \mathcal{S}_0} v_{0}  \cdot  \tau \, ds ,
\end{equation*}
\begin{equation*}
( v^{n}_{0} - \beta H )_n \longrightarrow \tilde{v} _0 \ \text{ in } L^{2} ({\mathcal F}_0),
\end{equation*}
and such that
\begin{equation*}
\omega^{n} := \curl v^{n}_{0}  \longrightarrow \omega_{0} \ \text{ in } \ L^{p} ({\mathcal F}_0). 
\end{equation*}

\end{Proposition}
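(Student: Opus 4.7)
The plan is to construct $v_0^n$ directly from the representation formula of Proposition \ref{sisi}, by regularizing only the vorticity while keeping the rigid data $(\ell_0, r_0)$ and the circulation $\gamma := \int_{\partial \mathcal{S}_0} v_0 \cdot \tau \, ds$ unchanged. Applying Proposition \ref{sisi} to $v_0$ itself yields
$$v_0 = K[\omega_0] + \beta H + \ell_{0,1} \nabla \Phi_1 + \ell_{0,2} \nabla \Phi_2 + r_0 \nabla \Phi_3,$$
with $\beta = \gamma + \alpha$ and $\alpha = \int_{\mathcal{F}_0} \omega_0$, so that $\tilde v_0 = K[\omega_0] + \ell_{0,1} \nabla \Phi_1 + \ell_{0,2} \nabla \Phi_2 + r_0 \nabla \Phi_3$. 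The obvious candidate is therefore
$$v_0^n := K[\omega_0^n] + \beta H + \ell_{0,1} \nabla \Phi_1 + \ell_{0,2} \nabla \Phi_2 + r_0 \nabla \Phi_3$$
for a well-chosen approximation $\omega_0^n$ of $\omega_0$.

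The key constraint is that, by Proposition \ref{sisi}, the coefficient of $H$ in the canonical decomposition of any $v_0^n$ with circulation $\gamma$ and vorticity $\omega_0^n$ equals $\gamma + \int \omega_0^n$; if this differs from $\beta$ then $v_0^n - \beta H$ contains a nonzero multiple of $H$, which is incompatible with $L^2$-convergence since $H \notin L^2(\mathcal{F}_0)$ (its asymptotic profile $x^\perp/(2\pi|x|^2)$ has modulus $1/(2\pi|x|)$, which is not square-integrable at infinity). One must therefore preserve the integral of the vorticity. This dictates a two-step approximation: first, produce $\tilde\omega_0^n \in C^\infty_c(\mathcal{F}_0)$ converging to $\omega_0$ in $L^p(\mathcal{F}_0)$ by multiplying $\omega_0$ by a smooth cutoff that vanishes in a $1/n$-neighborhood of $\partial \mathcal{S}_0$ and then mollifying at a smaller scale, with all supports contained in a fixed compact subset of $\overline{\mathcal{F}_0}$; second, correct the integral by fixing once and for all a function $\phi \in C^\infty_c(\mathcal{F}_0)$ with $\int \phi = 1$ and setting $\omega_0^n := \tilde\omega_0^n + c_n \phi$ with $c_n := \alpha - \int \tilde\omega_0^n \to 0$. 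By construction $\omega_0^n \to \omega_0$ in $L^p(\mathcal{F}_0)$ and $\int \omega_0^n = \alpha$.

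It then suffices to combine this construction with the representation formula. Since $\int \omega_0^n = \alpha$, Proposition \ref{sisi} tells us that $v_0^n \in \tilde L^2$, is divergence-free, has curl equal to $\omega_0^n$, satisfies the impermeability condition $v_0^n \cdot n = (\ell_0 + r_0 x^\perp)\cdot n$ on $\partial \mathcal{S}_0$, and has circulation $\gamma$ around $\partial \mathcal{S}_0$, so that the compatibility condition \eqref{CondCompatibilite} and the circulation equality both hold automatically. For the remaining convergences, $\curl v_0^n = \omega_0^n \to \omega_0$ in $L^p$ by construction, while the choice $\int \omega_0^n = \alpha$ gives exactly
$$v_0^n - \beta H - \tilde v_0 = K[\omega_0^n - \omega_0],$$
and the $L^2$-bound on the Biot-Savart operator from Section \ref{Sec:GreensFunction} yields
$$\|v_0^n - \beta H - \tilde v_0\|_{L^2(\mathcal{F}_0)} \leq C \|\omega_0^n - \omega_0\|_{L^p(\mathcal{F}_0)} \longrightarrow 0,$$
with a uniform constant $C$ because the supports of all the $\omega_0^n - \omega_0$ remain in a common compact set.

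The main obstacle is exactly this need to preserve the total mass of the vorticity along the regularization: it is what forces the additional $c_n \phi$-correction after the standard cutoff-and-mollify step, and it is the analytic reflection of the rigidity of the decomposition $\tilde L^2 = L^2(\mathcal{F}_0) \oplus \R H$, in which the generator $H$ of the second summand lies strictly outside $L^2(\mathcal{F}_0)$.
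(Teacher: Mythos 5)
Your proof is correct and follows the same basic strategy as the paper: approximate the vorticity by smooth functions supported in a fixed compact set, and rebuild $v_0^n$ from the div/curl decomposition of Proposition \ref{sisi} with the data $(\ell_0,r_0,\gamma)$ unchanged. The one genuine difference is where you place the correction for the mass defect $\int_{\mathcal{F}_0}(\omega_0^n-\omega_0)\,dx$, which is nonzero in general because regularizing near $\partial\mathcal{S}_0$ loses some vorticity. The paper absorbs it into the harmonic part, setting $v_0^n=K[\omega_0^n]+\beta^nH+\ell_1\nabla\Phi_1+\ell_2\nabla\Phi_2+r\nabla\Phi_3$ with $\beta^n:=\beta+\int_{\mathcal{F}_0}(\omega_0^n-\omega_0)\,dx$, which preserves the circulation since $\gamma=\beta^n-\int_{\mathcal{F}_0}\omega_0^n\,dx$; you instead add the corrector $c_n\phi$ to the vorticity so that $\int_{\mathcal{F}_0}\omega_0^n\,dx=\alpha$ exactly and the coefficient of $H$ stays equal to $\beta$. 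Your variant is actually cleaner with respect to the literal statement: since $H\notin L^2(\mathcal{F}_0)$, the paper's $v_0^n-\beta H$ contains the term $(\beta^n-\beta)H$, which is not in $L^2(\mathcal{F}_0)$ when $\beta^n\neq\beta$, so the displayed convergence must there be read as $v_0^n-\beta^nH\to\tilde v_0$ (which is what is in fact used later, the a priori estimates being written for $\tilde v=v-\beta H$ with the $\beta$ attached to the solution under consideration); your construction makes the stated $L^2$ convergence hold verbatim, at the harmless price of the extra term $c_n\phi$ with $c_n\to0$. Both routes give $\omega_0^n\to\omega_0$ in $L^p$, a common compact support (hence a uniform constant in the Biot--Savart estimate $\|K[\om]\|_{L^2}\le C\|\om\|_{L^p}$), and conservation of the circulation, so either completes the proof.
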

\begin{proof}
%

Let us consider a sequence of smooth mollifiers $(\eta_{n})_{n}$ supported in a fixed compact subset of $\R^{2}$ such that $\int_{{\mathcal F}_0} \eta_{n} \, dx =1$.
Let us denote by $\overline{\omega}_{0}$ the extension  of  $\omega_{0}$ by $0$ in $ \mathcal{S}_0 $. We consider  the functions $\omega^{n}_{0} := (\eta_{n} * \overline{\omega}_{0} )\vert_{\mathcal{F}_0}$ obtained by restriction to $ \mathcal{F}_0$ of the convolution product $\eta_{n} *\overline{\omega}_{0} $. 
These functions $\omega^{n}_{0}$ are smooth, supported in a compact $K$ (not depending on $n$)  of $\overline{{\mathcal F}}_0$ and  $(\omega^{n}_{0} )_{n}$ converges to  $\omega_{0}$ in $L^{p}$. This in particular entails, thanks to the formula \eqref{correct}, that
$$ \int_{ \partial \mathcal{S}_0}  \Big( K [\omega^{n}_{0} ]  - K [\omega_{0} ]  \Big) \cdot  \tau \, ds =
-  \int_{ \mathcal{F}_0}  (\omega_{0}^{n} - \omega_{0} ) dx \rightarrow 0 .$$
Then we define 
\begin{equation*}
v_{0}^{n} = K [\omega_{0}^{n} ] + \beta^{n} H + \ell_1 \nabla \Phi_1 + \ell_2 \nabla \Phi_2
+ r \nabla \Phi_3 , \text{ with }  \beta^{n} :=  \beta +  \int_{\mathcal{F}_0}  (\omega_{0}^{n} - \omega_{0} ) dx ,
\end{equation*}
which is convenient, according to the previous section.
\end{proof}
%

%
%
%
%
%
%
%
%
%
%
\section{A priori estimates}
\label{Sec:APE}
In this section we consider a smooth solution $(\ell ,r ,v )$ of  the problem  (\ref{Euler11})--(\ref{Solide1ci}) with, for any $t \in [0,T] $, $\text{supp } \omega  (t, \cdot)$ lying in a compact of $\mathcal{F}_{0}$.  Our goal is to derive some a priori bounds satisfied by any such solution. \par
We introduce
\begin{equation} \label{allo}
 \tilde{v}: = v - \beta H  .
\end{equation}
Note that due to the support of the vorticity one has
\begin{equation} \label{decay}
 \tilde{v} = {\mathcal O}\left( \frac{1}{|x|^{2}}\right) \ \text{ and } \nabla  \tilde{v} = {\mathcal O}\left( \frac{1}{|x|^{3}}\right) \  \text{ as } x \rightarrow \infty .
\end{equation}
The results of this section will be applied in the sequel to the 
classical solutions generated by the approximation sequence of the previous section.
\subsection{Vorticity}
Due to the equation of vorticity \eqref{vorty} and since we are considering regular solutions,
the generalized enstrophies are conserved as time proceeds, in particular, we have for any $t>0$,
\begin{eqnarray} \label{ConsOmega}
\| \omega  (t, \cdot) \|_{L^{p}(\mathcal{F}_{0})} 
= \| w_{0} \|_{ L^{p}( \mathcal{F}_{0} )},
\quad \| \omega(t,\cdot) \|_{ L^{1}(\mathcal{F}_{0})} 
= \| w_{0} \|_{ L^{1}( \mathcal{F}_{0} )}  .
\end{eqnarray}
One has also the following conservations:
\begin{gather}
\label{Kelvin}
\gamma =  \int_{  \partial \mathcal{S}_0} v  \cdot  \tau \, ds = \int_{  \partial \mathcal{S}(t)} u  \cdot  \tau \, ds
= \int_{  \partial \mathcal{S}_{0}} u_{0}  \cdot  \tau \, ds, \\
\label{DefAlpha2}
\alpha = \int_{ \mathcal{F}_0} \omega(t,x) \, dx = \int_{\mathcal{F}(t)} w(t,x) \, dx = \int_{\mathcal{F}_{0}} w_{0}(x) \, dx.
\end{gather}
\subsection{Energy-like estimate}
\label{NRJ}
Despite the fact that we are not considering finite-energy solutions, we have the following result.
\begin{Proposition} \label{PE2}
There exists a constant $C > 0$ (depending only on $\mathcal{S}_0 $, $m $, $J$, and $\beta$) such that for any smooth solution $(\ell ,r ,v)$ of the problem  \eqref{Euler11}--\eqref{Solide1ci} on the time interval $\lbrack 0, T \rbrack$, the energy-like quantity defined by:
\begin{equation*}
E (t) := \frac{1}{2} \left( m  \ell(t)^2 +   \mathcal{J} r(t)^2 +  \int_{ \mathcal{F}_{0} }  \tilde{v} (t,\cdot )^2 dx \right),
\end{equation*}
satisfies the inequality 
\begin{equation*}
 E (t) \leqslant  E (0) e^{Ct} +  e^{Ct} - 1 .
\end{equation*}
\end{Proposition}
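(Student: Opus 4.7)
The plan is to differentiate $E(t)$, exploit a cancellation between the fluid and solid pressure contributions, rewrite the remaining convective terms by substituting $v=\tilde{v}+\beta H$, and close the estimate via Gronwall's lemma. The central difficulty is that $H\notin L^{2}(\mathcal{F}_0)$, so several quantities in the energy balance are not a priori controlled by $E$; overcoming this requires the structural identities for $H$ collected in \eqref{EstiHache}.

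First, using \eqref{Solide11}--\eqref{Solide12} and $\ell\cdot\ell^\perp=0$,
\[
\frac{d}{dt}\Big(\tfrac{1}{2}m|\ell|^{2}+\tfrac{1}{2}\mathcal{J}r^{2}\Big)=\int_{\partial\mathcal{S}_0} q(\ell+rx^\perp)\cdot n\,ds.
\]
Since $\alpha$ and $\gamma$ are conserved by \eqref{Kelvin}--\eqref{DefAlpha2}, $\beta$ is constant and $\partial_t\tilde{v}=\partial_t v$. Testing \eqref{Euler11} against $\tilde{v}$, the pressure term integrates by parts (using $\div\tilde{v}=0$ and the decay \eqref{decay} at infinity) into $-\int_{\partial\mathcal{S}_0}q\tilde{v}\cdot n\,ds$. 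Because $H\cdot n=0$ on $\partial\mathcal{S}_0$, one has $\tilde{v}\cdot n=(\ell+rx^\perp)\cdot n$, and the two boundary integrals cancel exactly, leaving
\[
E'(t)=-\int_{\mathcal{F}_0}\tilde{v}\cdot\big((v-\ell-rx^\perp)\cdot\nabla\big)v\,dx-r\int_{\mathcal{F}_0}\tilde{v}\cdot v^\perp\,dx.
\]

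Next, set $w:=v-\ell-rx^\perp$ and substitute $v=\tilde{v}+\beta H$. The self-advective piece $\int\tilde{v}\cdot(w\cdot\nabla)\tilde{v}\,dx=\tfrac{1}{2}\int(w\cdot\nabla)|\tilde{v}|^{2}\,dx$ vanishes since $\div w=0$ in $\mathcal{F}_0$ and $w\cdot n=0$ on $\partial\mathcal{S}_0$, and $\tilde{v}\cdot\tilde{v}^\perp=0$. Isolating the rotational factor $-r(x^\perp\cdot\nabla)H$ inside $w$, one reorganizes
\[
E'(t)=-\beta\int_{\mathcal{F}_0}\tilde{v}\cdot\big((v-\ell)\cdot\nabla\big)H\,dx +r\beta\int_{\mathcal{F}_0}\tilde{v}\cdot\big(x^\perp\cdot\nabla H-H^\perp\big)dx.
\]
The key algebra is that the two individually ill-behaved rotational contributions $r\beta\int\tilde{v}\cdot(x^\perp\cdot\nabla)H$ and $r\beta\int\tilde{v}\cdot H^\perp$ (with integrands decaying only like $|x|^{-3}$, and factors that are not in $L^{2}$) combine into $x^\perp\cdot\nabla H-H^\perp$, which does lie in $L^{2}$ by \eqref{EstiHache}.

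Finally, expanding $v-\ell=\tilde{v}+\beta H-\ell$ in the first integral produces three pieces. The $\tilde{v}$-piece is bounded by $|\beta|\|\nabla H\|_{L^\infty}\|\tilde{v}\|_{L^2}^{2}$, using \eqref{EstiHache}. The $\ell$-piece is bounded by $|\beta||\ell|\|\nabla H\|_{L^2}\|\tilde{v}\|_{L^2}$. The $\beta H$-piece uses the identity $(H\cdot\nabla)H=\nabla(|H|^{2}/2)$, valid since $\curl H=0$, followed by integration by parts (the boundary contribution at infinity vanishes because $|H|^{2}|\tilde{v}|=O(|x|^{-4})$ while the length of circles grows linearly), producing a surface integral on $\partial\mathcal{S}_0$ bounded by $C(\mathcal{S}_0,\beta)(|\ell|+|r|)$. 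The rotational remainder is controlled by $|r\beta|\|\tilde{v}\|_{L^2}\|x^\perp\cdot\nabla H-H^\perp\|_{L^2}$. Combining with Young's inequality and the elementary bounds $|\ell|^{2}\le (2/m)E$, $r^{2}\le (2/\mathcal{J})E$, $\|\tilde{v}\|_{L^2}^{2}\le 2E$, we obtain
\[
E'(t)\le C\big(1+E(t)\big),
\]
with $C=C(\mathcal{S}_0,m,\mathcal{J},\beta)$. Applying Gronwall's lemma to $E+1$ yields $E(t)+1\le (E(0)+1)e^{Ct}$, which is precisely the claimed estimate. The main obstacle throughout is the non-$L^{2}$ nature of $H$: only the algebraic identities $\curl H=0$ and $x^\perp\cdot\nabla H-H^\perp\in L^{2}$ reduce the ill-behaved pieces of the energy balance to quantities controllable by $E$.
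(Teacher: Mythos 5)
Your proof is correct and follows essentially the same route as the paper: the same energy identity obtained by cancelling the pressure contributions of fluid and solid, the same substitution $v=\tilde v+\beta H$ leading to the same five terms (the self-advection term vanishing by incompressibility and tangency, and the terms $I_2,I_3,I_4$ controlled via \eqref{EstiHache}, including the crucial pairing of $x^\perp\cdot\nabla H$ with $H^\perp$), and the same Gronwall conclusion. The one place where you genuinely diverge is the term $\beta^2\int_{\mathcal F_0}\tilde v\cdot(H\cdot\nabla H)\,dx$: after the identity $(H\cdot\nabla)H=\nabla(|H|^2/2)$ and integration by parts, the paper invokes Blasius' lemma (Lemma \ref{blasius}) together with the Laurent expansion of $H$ and the residue theorem to show that this term is \emph{exactly zero}, whereas you simply bound the resulting boundary integrals $\int_{\partial\mathcal S_0}|H|^2 n\,ds$ and $\int_{\partial\mathcal S_0}|H|^2 x^\perp\cdot n\,ds$ by constants depending only on $\mathcal S_0$, yielding a contribution of size $C(|\ell|+|r|)$. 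Your version is more elementary and entirely sufficient for the differential inequality $E'\le C(1+E)$ and hence for the stated estimate; the paper's exact cancellation is sharper but not needed here.
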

\begin{proof}
We start by recalling that a classical solution satisfies \eqref{SemiW}, and we use the 
decomposition \eqref{allo} in the left hand side and an integration by parts for the first term of the right hand side to get that 
for any test function $\Psi \in \tilde{\mathcal{H} }_{T}$,
\begin{equation*}
m \, \ell' \cdot \ell_\Psi + \mathcal{J} r' r_\Psi +  \int_{\mathcal{F}_{0} } \partial_{t} \tilde{v}  \cdot \Psi 
 = - \int_{\mathcal{F}_0}  \Psi \cdot\left(\left(v-\ell - r x^\perp \right)\cdot\nabla\right) v 
 +  \int_{\mathcal{F}_0} r v^\perp   \cdot \Psi.
\end{equation*}
Then, by a standard regularization process, we observe that the previous identity is still valid for the test function $\Psi $ defined by $ \Psi (t,x) =  \tilde{v}(t,x)  $ for $(t,x)$ in $ [ 0,T] \times \mathcal{F}_0$ and  $ \Psi (t,x) =   \ell(t)  +  r(t) x^\perp $ for  $(t,x)$ in $ [ 0,T] \times \mathcal{S}_0$. Hence we get:
\begin{eqnarray*}
E' (t)
&=& - \int_{\mathcal{F} _0 }  \tilde{v} \cdot\left(\left(v-\ell - r x^\perp \right)\cdot\nabla\right)  {v} \, dx
-  \int_{\mathcal{F}_0} r v^\perp   \cdot  \tilde{v}  \, dx \\ 
&=& - \int_{\mathcal{F} _0 }  \tilde{v}  \cdot \Big( (  {v}  
- \ell-rx^\perp) \cdot\nabla  \tilde{v}   \Big)
- \beta  \int_{\mathcal{F} _0 }  \tilde{v}  \cdot ( \tilde{v} \cdot \nabla H )
+ \beta  \int_{\mathcal{F} _0 }  \tilde{v}  \cdot ( \ell  \cdot\nabla H )
- \beta r \int_{\mathcal{F} _0 }  \tilde{v}  \cdot    (H^\perp - x^\perp  \cdot\nabla H  ) \\
&& -  \beta^2 \int_{\mathcal{F} _0 }  \tilde{v}  \cdot ( H \cdot \nabla H ) \\
& =: & I_1 + I_{2}+ I_{3}+ I_4  + I_5 .
\end{eqnarray*}
Integrating by parts we obtain that  $I_1 = 0$, since $v - \ell -r x^\perp$ is a divergence free vector field tangent to the boundary. Let us stress that there is no contribution at infinity because of the decay properties of the various fields involved, see \eqref{decay}. \par 
On the other hand, using \eqref{EstiHache},  we get that there exists $C>0$ depending only on ${\mathcal{F} _0 }$ such that
\begin{equation*}
| I_2 | + | I_3 | + | I_4 | \leqslant C |\beta| \left( \int_{\mathcal{F} _0 } \tilde{v}^2 \,dx  + \ell^2 + r^2 \right).
\end{equation*}
Let us now turn our attention to $I_5$.
We first  use  that $H$ being curl free, we have 
\begin{equation*}
 \int_{\mathcal{F} _0 }  \tilde{v}  \cdot ( H \cdot \nabla H ) = \frac{1}{2} \int_{\mathcal{F} _0 }  (\tilde{v}  \cdot \nabla  )| H |^2
\end{equation*}
and then an integration by parts and \eqref{Euler13}
to obtain 
\begin{equation*}
 \int_{\mathcal{F} _0 }  \tilde{v}  \cdot ( H \cdot \nabla H ) dx = \frac{1}{2} \int_{\partial \mathcal{S} _0 }  (\tilde{v}  \cdot  n )| H |^2 ds 
  = \frac{1}{2} \ell \cdot \int_{\partial \mathcal{S} _0 }  | H |^2 n ds  +   \frac{1}{2} r   \int_{\partial \mathcal{S} _0 }  | H |^2 x^\perp \cdot n ds .
\end{equation*}
We will make use of the following classical Blasius' lemma (cf. e.g. \cite[Lemma 5]{GLS})
\begin{Lemma}
\label{blasius}
Let $\mathcal{C}$ be a smooth Jordan curve, $f:=(f_1 , f_2)$ and $g:=(g_1 ,g_2 )$ two smooth tangent vector fields on $\mathcal{C}$. Then 
\begin{gather} \label{bla1}
\int_{ \mathcal{C}} (f  \cdot g) n \, ds =  i \left( \int_{ \mathcal{C}} (f_1 - if_2) (g_1 - i g_2) \, dz \right)^*, \\
\label{bla2}
\int_{ \mathcal{C}} (f  \cdot g) (x^{\perp} \cdot n)  \,ds =  \Re \left( \int_{ \mathcal{C}} z (f_1 - if_2) (g_1 - i g_2) \, dz \right).
\end{gather}
where $(\cdot )^*$ denotes the complex conjugation.
\end{Lemma}
Therefore using the Laurent series expansion for $H=(H_1,H_2)$ (see e.g. \cite[Eq. (33)]{GLS}):
\begin{equation*}
H_1(z)-i H_2(z) = \frac{1}{2i \pi z} + {\mathcal O} \left( \frac{1}{z^2} \right),
\end{equation*}
where we identify $(x,y) \in \R^{2}$ and $x+iy \in \C$. Then Cauchy's residue Theorem gives that $I_5 = 0$. \par
Collecting all these estimates it only remains to use Gronwall's lemma to conclude.
\end{proof}
\subsection{Bound of the body acceleration}
The aim of this section is to prove the following  a priori estimate  of the body acceleration.
\begin{Proposition}
\label{Toascoli}
There exists a constant $C>0$ depending only on $\mathcal{S}_0 $, $m $,  ${\mathcal J}$, $\beta$ and  $E(0)$ such that  any classical solution of \eqref{Euler11}--\eqref{Solide1ci} satisfies the estimate
\begin{eqnarray*}
 \| (\ell' , r')  \|_{L^{\infty} (0,T)} \leq C .
\end{eqnarray*}
\end{Proposition}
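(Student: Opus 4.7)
The plan is to derive a closed linear system for $(\ell_1', \ell_2', r')$ by testing the momentum equation \eqref{Euler11} against the Kirchhoff potentials $\nabla \Phi_i$, $i=1,2,3$; this eliminates the pressure thanks to \eqref{Solide11}--\eqref{Solide12}. The system will take the form
\begin{equation*}
(\mathcal{M} + \mathcal{M}_a) (\ell_1', \ell_2', r')^{T} = F,
\end{equation*}
with $\mathcal{M} := \operatorname{diag}(m, m, \mathcal{J})$ and the added-mass matrix $\mathcal{M}_a := \bigl(\int_{\mathcal{F}_0} \nabla \Phi_i \cdot \nabla \Phi_j \, dx\bigr)_{i,j=1,2,3}$. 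Being a Gram matrix, $\mathcal{M}_a$ is symmetric positive semi-definite, so $\mathcal{M} + \mathcal{M}_a$ is invertible and the estimate reduces to a uniform bound on $|F|$.

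Concretely, multiplying \eqref{Euler11} by $\nabla \Phi_i$ and integrating over $\mathcal{F}_0$ converts $\int \nabla q \cdot \nabla \Phi_i$ into $\int_{\partial \mathcal{S}_0} q K_i \, ds$ (using $\Delta \Phi_i = 0$ and the decay of $\Phi_i$ at infinity), which by \eqref{Solide11}--\eqref{Solide12} equals $m \ell_i' + m r (\ell^\perp)_i$ for $i=1,2$ and $\mathcal{J} r'$ for $i=3$. The time derivative gives $\tfrac{d}{dt} \int v \cdot \nabla \Phi_i$; inserting the representation $v = K[\omega] + \beta H + \ell_1 \nabla \Phi_1 + \ell_2 \nabla \Phi_2 + r \nabla \Phi_3$ from Proposition~\ref{sisi}, using that $\int H \cdot \nabla \Phi_i$ is time-independent, and exploiting the crucial simplification
\begin{equation*}
\int_{\mathcal{F}_0} K[\omega] \cdot \nabla \Phi_i \, dx = 0
\end{equation*}
(immediate by integration by parts: $\operatorname{div} K[\omega] = 0$, $K[\omega] \cdot n = 0$ on $\partial \mathcal{S}_0$, and $\Phi_i \to 0$ at infinity), one finds that $\tfrac{d}{dt} \int v \cdot \nabla \Phi_i$ is exactly the $i$-th component of $\mathcal{M}_a (\ell_1', \ell_2', r')^{T}$, which yields the announced system.

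Bounding $F$ is the substantial remaining task. Integrating by parts once more,
\begin{equation*}
\int_{\mathcal{F}_0} \bigl((v - \ell - r x^\perp)\cdot\nabla\bigr) v \cdot \nabla \Phi_i \, dx = -\int_{\mathcal{F}_0} v \cdot \bigl((v - \ell - r x^\perp)\cdot\nabla\bigr) \nabla \Phi_i \, dx,
\end{equation*}
with the solid-boundary term vanishing by \eqref{Euler13} and the one at infinity by the decay of $\nabla \Phi_i$. The main obstacle here is that $v$ fails to be in $L^2(\mathcal{F}_0)$ because of the $\beta H$ piece. I would overcome this exactly as in the proof of Proposition~\ref{PE2}: split $v = \tilde v + \beta H$ and handle each resulting cross term individually. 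The purely $H$-quadratic contribution $\beta^2 \int H \cdot (H\cdot\nabla) \nabla \Phi_i$ is a finite constant depending only on $\mathcal{S}_0$; all the others are controlled by Cauchy--Schwarz using $\|\tilde v\|_{L^2(\mathcal{F}_0)}^2 \leqslant 2 E(t)$ together with the pointwise decays $\nabla \Phi_i = O(|x|^{-2})$, $\nabla^2 \Phi_i = O(|x|^{-3})$, $H = O(|x|^{-1})$ from Section~\ref{Sec:Velocity}, which make finite, in terms of $\mathcal{S}_0$ alone, every weighted integral of $\nabla^2 \Phi_i$ against $H$, $x^\perp$, or $|H|^2$ that arises. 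The Coriolis-type term $\int r v^\perp \cdot \nabla \Phi_i$ and the contribution $m r \ell^\perp$ are estimated analogously. The outcome is an estimate $|F| \leqslant C(1 + E(t) + |\ell(t)|^2 + |r(t)|^2)$, uniformly bounded on $[0,T]$ by Proposition~\ref{PE2}, after which inverting $\mathcal{M} + \mathcal{M}_a$ yields the claimed $L^\infty$ bound on $(\ell', r')$.
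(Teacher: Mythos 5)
Your proposal is correct and follows essentially the same route as the paper: testing against the Kirchhoff potentials to produce the invertible mass-plus-added-mass matrix $\mathcal{M}_1+\mathcal{M}_2$, and then bounding the remaining forcing terms via the splitting $v=\tilde v+\beta H$, the decay of $\nabla\Phi_i$, $\nabla^2\Phi_i$ and $H$, and Proposition \ref{PE2}. The only (harmless) variation is that you compute the added-mass contribution $\tfrac{d}{dt}\int_{\mathcal{F}_0}v\cdot\nabla\Phi_i\,dx$ from the representation of Proposition \ref{sisi} together with $\int_{\mathcal{F}_0}K[\omega]\cdot\nabla\Phi_i\,dx=0$, whereas the paper obtains the same matrix $\mathcal{M}_2$ from the boundary condition \eqref{Euler13} and Green's formula.
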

\begin{proof}
Again, after an approximation procedure, we can use \eqref{SemiW} with, as test functions, the functions $(\Psi_{i} )_{i=1,2,3}$ defined by $\Psi_i  = \nabla \Phi_i $ in $ \mathcal{F}_{0}$ and $\Psi_i = e_i$, for $i=1,2$ and $\Psi_{3 }=  x^\perp $ in $ \mathcal{S}_{0}$.
We observe that 
\begin{equation*}
\big( (\partial_{t} \tilde{v} , \Psi_i   )_\rho \big)_{i=1,2,3}
= \mathcal{M}_1 \begin{pmatrix} \ell_{1} \\ \ell_{2} \\r \end{pmatrix} '(t) + ( A_i   )_{i=1,2,3} (t) ,
\end{equation*}
where
\begin{equation} 
\mathcal{M}_1 := \begin{bmatrix} m \Id_2 & 0 \\ 0 & \mathcal{J} \end{bmatrix}  \ \text{ and } \ 
A_i :=  \int_{  \mathcal{F}_{0}  }  \partial_{t}  v  \cdot  \nabla   \Phi_i  \, dx  .
\end{equation}
Let us examine $A_i$ for $i=1,2,3$. As $\div v =0$ for all time, using \eqref{t1.3sec}-\eqref{t1.4sec} we deduce that
\begin{equation*}
A_i = \int_{  \partial \mathcal{S}_{0}  } \partial_{t}  v  \cdot  n \, \Phi_i \, ds .
\end{equation*}
Now using the boundary condition \eqref{Euler13} and Green's formula we obtain
\begin{equation*}
(A_i )_{i=1,2,3} = \mathcal{M}_2 
\begin{pmatrix} \ell_{1} \\ \ell_{2} \\r \end{pmatrix} ' \text{ where }
\mathcal{M}_2
:= \begin{bmatrix} \displaystyle\int_{\mathcal{F}_0} \nabla \Phi_a \cdot \nabla \Phi_b \, dx \end{bmatrix}_{a,b \in \{1,2,3\}} 
\end{equation*}
encodes the phenomenon of added mass, which, loosely speaking,  measures how much the  surrounding fluid resists the acceleration as the body moves through it. 
We also introduce the matrix 
\begin{equation} \label{InertieMatrix}
\mathcal{M} :=\mathcal{M}_1+\mathcal{M}_2, 
\end{equation}
which is symmetric and positive definite.
The equations  given by \eqref{SemiW}  are now recast as follows:
\begin{equation*}
\mathcal{M}  \begin{bmatrix} \ell \\ r \end{bmatrix} '
 =  \begin{bmatrix}B_i  + C_i   \end{bmatrix}_{i \in \{1,2,3\}} ,
\end{equation*}
where 
\begin{equation*}
B_i := \int_{  \mathcal{F}_{0}  }  v   \cdot    \left[(v-\ell-r x^\perp)\cdot\nabla   \nabla   \Phi_i  \right] \, dx
\text{ and } C_i := - \int_{\mathcal{F}_{0}} r v^\perp \cdot \nabla \Phi_i \, dx.
\end{equation*}
Let us now make use of the decomposition  \eqref{allo}.
We have 
\begin{eqnarray*}
B_i &=&  \int_{\mathcal{F}_{0} } \tilde{v} \cdot \left[ (\tilde{v} \cdot \nabla)  \nabla \Phi_i \right] \, dx
+ \beta  \int_{\mathcal{F}_{0} } H   \cdot  \left[ (\tilde{v} \cdot\nabla) \nabla \Phi_i \right]  \, dx
+ \beta  \int_{\mathcal{F}_{0} }   \tilde{v} \cdot  \left[ (H \cdot\nabla) \nabla \Phi_i \right]  \, dx \\
&& + \beta^2  \int_{\mathcal{F}_{0} }   H   \cdot  \left[  (H  \cdot\nabla)  \nabla   \Phi_i \right]  \, dx
- \int_{  \mathcal{F}_{0}}  \tilde{v} \cdot    \left[((\ell + r x^\perp) \cdot \nabla) \, \nabla \Phi_i \right] \, dx
- \beta \int_{  \mathcal{F}_{0}} H \cdot \left[((\ell + r x^\perp)\cdot\nabla)  \nabla   \Phi_i \right] \, dx , \\
-C_i  &=&  \int_{  \mathcal{F}_{0} } r \tilde{v}^\perp  \cdot  \nabla   \Phi_i \, dx
+ \beta \int_{\mathcal{F}_{0} } r  H^\perp  \cdot  \nabla   \Phi_i \, dx .
\end{eqnarray*}
It then suffices to use 
Proposition  \ref{PE2} and the decaying properties of $  \Phi_i$ and $H$ to obtain a bound of  $\ell'$ and $r'$.

\end{proof}
\subsection{Temporal estimate of the fluid  acceleration}
\label{PF}
We use again  \eqref{SemiW} to get that for any $\xi \in C^\infty_{c} ( \mathcal{F}_{0} ; \R  )$  to get a  bound of 
$\xi  \partial_t v $  in $L^2 ( 0,T; H^{-2} ( \mathcal{F}_{0} ))$. 
\section{End of the proof of Theorem \ref{ThmYudo}}
\label{PL}
We start with an initial data $(\ell_0 , r_0, v_0)$ as in Theorem  \ref{ThmYudo}. 
Then we apply Proposition \ref{PropRegu} to get a sequence of regularized initial data $(\ell^{n}_{0}, r^{n}_{0}, v^{n}_{0}) $. 
According to \cite[Theorem 1]{GS}, these data generate some global-in-time classical solutions that we denote by $(\ell^{n}, r^{n}, v^{n})$. 
These solutions satisfy uniformly in $n$ the estimates given in Section \ref{Sec:APE} since these estimates only depends on the geometry and on some norms of the initial data which are bounded  uniformly in $n$ according to Proposition \ref{PropRegu}. \par
As explained above a classical solution of \eqref{Euler11}--\eqref{Solide1ci} in $[0,T]$ is also a weak solution. Therefore for any test function $\Psi \in \tilde{\mathcal{H} }_{T}$, for any $n$, one has 
\begin{multline} \label{FF}
(  \Psi(T,\cdot ) , \overline{v}^{n}(T,\cdot) )_\rho - (  \Psi (0,\cdot ) , \overline{v}^{n}_0 )_\rho
= \int_0^T \left( \frac{\partial \Psi}{\partial t},\overline{v}^{n} \right)_\rho \, dt
+ \int_0^T \int_{\mathcal{F}_0} v^{n} \cdot \left( v^{n} \cdot \nabla \right) \Psi \, dx \, dt \\ 
- \int_0^T \!\! \int_{\mathcal{F}_0} v^{n} \cdot \left( \left( \ell^{n} + r^{n} x^\perp \right) \cdot \nabla \right) \Psi \, dx\, dt
- \int_0^T \!\! \int_{\mathcal{F}_0} r^{n} (v^{n})^\perp   \cdot \Psi  \, dx\, dt .
\end{multline}
Let us now explain how to pass to the limit. The main step consists in extracting a subsequence to $(\ell^{n}, r^{n}, v^{n})$ by using the above a priori estimates. Using Proposition \ref{Toascoli} and Ascoli's theorem yields that some subsequences of $(\ell^{n})$ and of $(r^{n})$ are converging strongly in $C ([ 0,T])$. Now, we use Propositions \ref{sisi}, \ref{PropRegu} and \ref{PE2} and \eqref{ConsOmega}-\eqref{DefAlpha2} to obtain uniform bounds on $\| \tilde{v}_{n} \|_{L^{2}}$ and $\| \nabla v^{n} \|_{L^p (\mathcal{F}_{0})}$ in $L^\infty ( 0,T)$.
Using the bound obtained in Section \ref{PF} and the Aubin-Lions lemma we deduce that some subsequence of $(v^{n})$ converges strongly in $C([0,T]; L^2_{loc})$. \par
Now it is elementary to check that these convergence are sufficient to pass to the limit in \eqref{FF}.
This achieves the proof of Theorem \ref{ThmYudo}. \par
\ \par \noindent
{\bf Acknowledgements.} The  authors are partially supported by the Agence Nationale de la Recherche, Project CISIFS, Grant ANR-09-BLAN-0213-02. \par
\end{document}